\theoremstyle{definition}
\newtheorem{theorem}{Theorem}
\newtheorem{definition}{Definition}
\theoremstyle{remark}
\begin{document}

\title{Non-Iterative Solution for Coordinated Optimal Dispatch via Equivalent Projection---Part I: Theory}






\author{Zhenfei~Tan,~\IEEEmembership{Member,~IEEE,}
      Zheng~Yan,~\IEEEmembership{Senior~Member,~IEEE,}
      Haiwang Zhong,~\IEEEmembership{Senior~Member,~IEEE,}
      and~Qing~Xia,~\IEEEmembership{Senior~Member,~IEEE}
      \vspace{-2em}
\thanks{Z. Tan and Z. Yan are with the Key Laboratory of Control of Power Transmission and Conversion (Shanghai Jiao Tong University), Ministry of Education, Shanghai 200240, China. H. Zhong and Q. Xia are with the State Key Laboratory of Power Systems, the Department of Electrical Engineering, Tsinghua University, Beijing 100084, China. Corresponding Author: Z. Yan (e-mail: yanz@sjtu.edu.cn).}

}



%



\maketitle

\begin{abstract}
  Coordinated optimal dispatch is of utmost importance for the efficient and secure operation of hierarchically structured power systems. Conventional coordinated optimization methods, such as the Lagrangian relaxation and Benders decomposition, require iterative information exchange among subsystems. Iterative coordination methods have drawbacks including slow convergence, risk of oscillation and divergence, and incapability of multi-level optimization problems. To this end, this paper aims at the non-iterative coordinated optimization method for hierarchical power systems. The theory of the equivalent projection (EP) is proposed, which makes external equivalence of the optimal dispatch model of the subsystem. Based on the EP theory, a coordinated optimization framework is developed, where each subsystem submits the EP model as a substitute for its original model to participate in the cross-system coordination. The proposed coordination framework is proven to guarantee the same optimality as the joint optimization, with additional benefits of avoiding iterative information exchange, protecting privacy, compatibility with practical dispatch scheme, and capability of multi-level problems. 
\end{abstract}


\begin{IEEEkeywords}
  Coordinated optimization, equivalent model, optimal dispatch, projection, non-iterative.
\end{IEEEkeywords}

%
\IEEEpeerreviewmaketitle

\bstctlcite{IEEEexample:BSTcontrol} 
\section{Introduction}\label{sec:intro}
\subsection{Background and Motivation}
\IEEEPARstart{M}{odern} electric power systems are structured and managed hierarchically, stretching from the inter-regional network, to the regional transmission system, high/medium/low-voltage distribution system, and user-side microgrid. With the rapid development of ultra-high-voltage transmissions, the surge of distributed flexible resources, and the integration of multiple energy sectors, systems in different regions and levels are coupled more closely. The coordinated optimal dispatch (COD) among different subsystems is becoming increasingly important to facilitate the economic and secure operation of the entire system.

Owing to technical restrictions (such as limited computation capacity) and regulatory reasons (such as management independence and information privacy), it is impractical to dispatch the hierarchical system by the centralized optimization that requires the collection of global information \cite{lit:dual_6}. In this regard, researchers introduce coordinated optimization methods to realize the COD in a decoupled fashion. The coordinated optimization decomposes the joint optimization problem into an upper-level problem and a series of lower-level problems. These problems are solved by corresponding subsystem operators independently and the overall optimum is achieved via the iterative information exchange among the upper-and lower-level systems. A broad spectrum of coordinated optimization methods has been developed with applications to various scenarios, e.g., the multi-area coordinated dispatch, transmission-distribution coordinated dispatch, and the coordinated control of distributed resources. However, conventional coordinated optimization methods rely on iterative information interaction, which has drawbacks including, 1) convergence issue: the iteration process may converge slowly, oscillate, or even diverge under some system operation states, and the convergence performance of some methods also depends on the parameter tuning; 2) communication burden: the iterative information exchange will occupy the communication channel for a long time until the iteration convergences; 3) scalability issue: the iteration number increases sharply with the number of system levels and subsystems \cite{ref:nested}, which impedes the coordination of hierarchical systems with multiple levels and multiple subsystems; and 4) compatibility issue: subsystems in different regions and levels are coordinated in a single-round serial manner in the real-world dispatch, which repels the iterative coordination method that requires repeated information exchange \cite{ref:iteration}. These drawbacks restrict the practical application of iterative coordination methods. Realizing the coordinated optimization in a non-iterative fashion will radically overcome the drawbacks above, which is a longstanding expectation from the industry and academia.

\begin{table*}[t]
  \caption{Review of Coordinated Optimal Dispatch Methods}
  \label{tab:review_dco}
  \centering
  \begin{threeparttable}
  \setlength{\tabcolsep}{3.5mm}
  \begin{tabular}{lllllll}
  \toprule
  Method & Decomposed & \makecell[l]{No \\iteration} & \makecell[l]{Privacy \\protection} &  \makecell[l]{Multi-level \\ coordination} & \makecell[l]{Communication \\burden} & \makecell[l]{Computation\\ burden \tnote{1}} \\ 
  \midrule
  Centralized optimization & $\times$ & \checkmark & $\times$  & \checkmark & Low & High\\
  Encrypted optimization [26], [27] & $\times$ & \checkmark & \checkmark  & \checkmark & Low & High \\
  Primal decomposition & & & & & & \\
  - Benders decomposition [5], [6] & \checkmark & $\times$ & \checkmark  & $\times$ & High & Medium \\
  - Multi-parametric programming [7]  & \checkmark & $\times$ & \checkmark  & $\times$ & High & Medium \\
  - Marginal equivalence [8], [17] & \checkmark & $\times$ & \checkmark & $\times$ & High & Medium\\
  - \textbf{Equivalent projection (this paper)} & \checkmark & \checkmark & \checkmark  & \checkmark & Low & Medium\\
  Dual decomposition [1], [9-13], [19]  & \checkmark & $\times$ & \checkmark & $\times$ & High & Medium \\
  Consensus algorithm [14-16] & \checkmark & $\times$ & \checkmark & $\times$ & High & Low \\ 
  \bottomrule
  \end{tabular}
  \begin{tablenotes}
    \item[1] Maximum computation resource requirement of upper-level and lower-level subsystems.
  \end{tablenotes}
  \end{threeparttable}
  \end{table*}

To this end, this work aims at the non-iterative coordinated optimization method, i.e., coordinating the dispatch of multiple subsystems without iterative information exchange. The cause that conventional coordinated optimization requires iterations is: the information exchanged in each round of iteration does not represent complete technical and economic features of subsystems and thus, making the upper-level decision conflict with the feasibility or optimality of subsystems. Consequently, iterative information exchange is needed to revise the upper-level decision. To address this issue, this study proposes a novel system reduction theory termed the equivalent projection (EP), which makes external equivalence of the subsystem with much fewer data. The EP model depicts complete technical and economic information of the subsystem using a group of inequalities regarding coordination variables, and can be used as a substitute for the original subsystem model in the COD. The EP model ensures that the upper-level decision is consistent with the feasibility and optimality of the lower-level system, which avoids iterative information exchange among subsystems and realizes the COD in an iteration-free fashion. 

The EP theory and the non-iterative COD framework are introduced in Part I of this paper. To calculate the EP, a novel polyhedral projection algorithm is developed in Part II, along with detailed applications of the proposed COD method. 


\subsection{Literature Review}
Existing studies on the coordinated optimization and system reduction are reviewed, as summarized in TABLE \ref{tab:review_dco} and TABLE \ref{tab:review_reduction}, respectively.

\subsubsection{Coordinated dispatch}
Applications of the coordinated optimization in power systems date back to the 1980s \cite{ref:dco_first}. Existing coordinated optimization methods can be generally classified into two categories, i.e., the primal decomposition and the dual decomposition. The former decomposes the joint optimization problem by splitting the coupling variables and coordinates the solution of the upper and lower problems by updating coupling variables. Typical primal decomposition methods include: the Benders decomposition \cite{ref:primal_1}, generalized Benders decomposition \cite{ref:primal_2}, multi-parametric programming method \cite{ref:primal_3}, marginal equivalence method \cite{ref:primal_5}, etc. The latter decomposes the joint optimization by relaxing the coupling constraints to the objective function, and coordinates the solution of upper and lower problems by updating dual multipliers of coupling constraints. Typical relaxation methods include: the Lagrangian relaxation \cite{lit:dual_1}, augmented Lagrangian relaxation \cite{lit:dual_2}, optimality condition decomposition \cite{lit:dual_3}, etc. These relaxation methods can be combined with different multiplier updating algorithms, e.g., sub-gradient method, alternating direction method of multipliers (ADMM) \cite{lit:dual_4}, cutting plan method \cite{lit:dual_5}, and dynamic multiplier updating method \cite{lit:dual_6}. In addition to the coordinated optimization, fully distributed dispatch methods are also developed based on the consensus algorithm \cite{lit:consensus, lit:review3}. These methods only require algebraic calculation for each subsystem and thus have very low computation expense. However, the communication burden will increase as much more iterations are needed by these methods.

The coordinated optimization realizes the decomposed optimal dispatch of interconnected subsystems by merely exchanging some boundary information of each subsystem, which protects the information privacy and alleviating the computation burden of the centralized optimization. In the literature, applications of the coordinated optimization are studied for different problems, e.g., multi-regional economic dispatch \cite{ref:app_1}, market integration of multiple regions \cite{ref:app_2}, transmission-distribution system coordination \cite{ref:app_3}, and coordinated dispatch of multi-energy systems \cite{ref:app_4}. In addition to the optimal dispatch, coordinated optimization methods are also applied in areas of the optimal planning \cite{ref:app_6}, coordinated state estimation \cite{ref:app_5}, and optimal voltage/Var control \cite{ref:app_7}. Reviews of coordinated dispatch methods in power system can be found in \cite{ref:iteration, lit:dco_review2}.

However, the requirement of iterative information exchange brings computational and practical drawbacks to existing coordinated optimization methods, as analyzed in the former section. Some recent references have recognized the significance of the non-iterative solution for coordinated optimization. In reference \cite{ref:dco_non_iterative}, the non-iterative transmission-distribution system coordination is realized based on the aggregated cost function of the distribution system. In this study, however, the dimension of coordination variable is restricted to 1, which greatly simplifies the problem setup. In \cite{ref:tan4}, a variable and constraint elimination method is proposed to realize the non-iterative COD of the multi-area power system, in which high-dimensional coordination variables are incorporated. Nevertheless, the applicability to multi-level systems is not addressed in this work. References \cite{ref:privacy1} and \cite{ref:privacy2} solve the coordinated optimization in a centralized manner and protect information privacy of subsystems through data encryption. Iterations are avoided in these studies, but computational barriers exist since the coordinator has to solve a large-scale joint optimization problem. Furthermore, the coordinated optimization results can only be decoded by each subsystem, which may hinder the transparency of dispatch and pricing. Basic theory and method for the non-iterative, privacy-protected, and computationally efficient COD methods remain challenging issues, which motivates the present work.

\subsubsection{System reduction}

\begin{table}
  \caption{Review of System Reduction Methods}
  \label{tab:review_reduction}
  \centering
  \begin{tabular}{llll}
    \toprule
    \multirow{2}{*}{Method} & \multicolumn{3}{c}{Considered elements}\\
    \cline{2-4}
     & \makecell[l]{Equality \\constraints} & \makecell[l]{Inequality \\constraints} & \makecell[l]{Objective \\function} \\
    \midrule
    Network reduction [28], [29] & \checkmark & $\times$ & $\times$\\
    Equivalent line limit [30] & \checkmark & \checkmark & $\times$ \\
    \makecell[l]{Feasible region projection\\and aggregation [31-36]} & \checkmark & \checkmark & $\times$ \\
    \textbf{\makecell[l]{Equivalent projection \\(this paper)}} & \checkmark & \checkmark & \checkmark \\
    \bottomrule
  \end{tabular}
\end{table}

To simplify the analysis and calculation of large-scale interconnected power systems, it is desired to reduce the scale (number of variables and/or constraints) of the system model \cite{ref:net_eq}. The most common reduction technique is the network equivalence, e.g., the Ward equivalence \cite{ref:ward}, which eliminates internal variables from the network equation of an electric grid and yields the equivalent network model at boundary nodes. The network reduction is widely applied in power flow calculation, security analysis, state estimation, and other static analysis problems. However, the network reduction is incapable of coordinated optimization problems since operation limits and cost functions of resources are not incorporated in the reduction. 

To address this issue, an equivalent method preserving transmission limits is proposed in \cite{ref:net_reduce_cst}, which captures the thermal limits of equivalent lines. However, the network is assumed unloaded in this work, and resource operation limits are omitted in the equivalent model. In the authors' previous work \cite{ref:tan1}, projection methods are proposed to reduce the operation constraints of the regional transmission system to the tie-line boundary, which can be used to enforce internal constraints of each area in the cross-area power trading. In \cite{ref:minkowski1}, the Minkowski addition is used to estimate and aggregate the flexibility of distributed resources. In \cite{ref:tan2} and \cite{ref:tan3}, the projection-based reduction method is studied to capture the allowable range of active and reactive power output of virtual power plants. These studies focus on enforcing technical constraints in the reduced model, but do not consider the operation cost function of the system. Reference \cite{ref:minkowski} proposes a Zonotope-based method for characterizing the flexibility region of distributed resources with explicit pricing of each Zonotope parameter. Reference \cite{ref:dist_pq} characterizes flexibility regions of the distribution network given different operation cost levels. Since cost functions are modeled separately from the projection of operation constraints, the accuracy of these methods is limited. Additionally, existing methods are developed for specific application scenarios. There is a research gap in general system reduction theory incorporating both operation constraints and objective function of the system, which will be addressed in this work.


\subsection{Contributions and Paper Organization}
The contribution of Part I is twofold,

1) A novel system reduction theory namely the EP is proposed, which makes external equivalence of the entire optimization model with much fewer variables and constraints. The EP model is proven to capture identical technical and economic operation characteristics of a subsystem without revealing private information.

2) A novel coordinated optimization framework is developed based on the EP, which supports the COD of multi-level systems in a decomposed, iteration-free, and privacy-protected fashion. This coordination framework also has the computational advantage by reducing the scale of the subsystem model.

The paper is organized as follows. Section II introduces the joint optimal dispatch problem and its primal decomposition. Section III introduces the EP theory. Section IV develops the EP-based coordinated optimization method and discusses its properties. Section V illustrates the proposed coordination method with a small example. Section VI concludes this paper.

\section{Problem Formulation}\label{sec:formulation}
\subsection{Basic Notation}
Consider the COD of a system composed of multiple levels on a short-term basis (e.g., day-ahead, hour-ahead, and 5 minute-ahead). Assume the tree-form connection of subsystems, i.e., each lower-level system is connected to one and only one upper-level system, which can be achieved by properly splitting the entire system. For two adjacent levels, as shown in Fig. \ref{fig_structure}, variables in each subsystem can be partitioned into two groups,
\begin{figure}[t]
  \centering
  \includegraphics[width=3.3in]{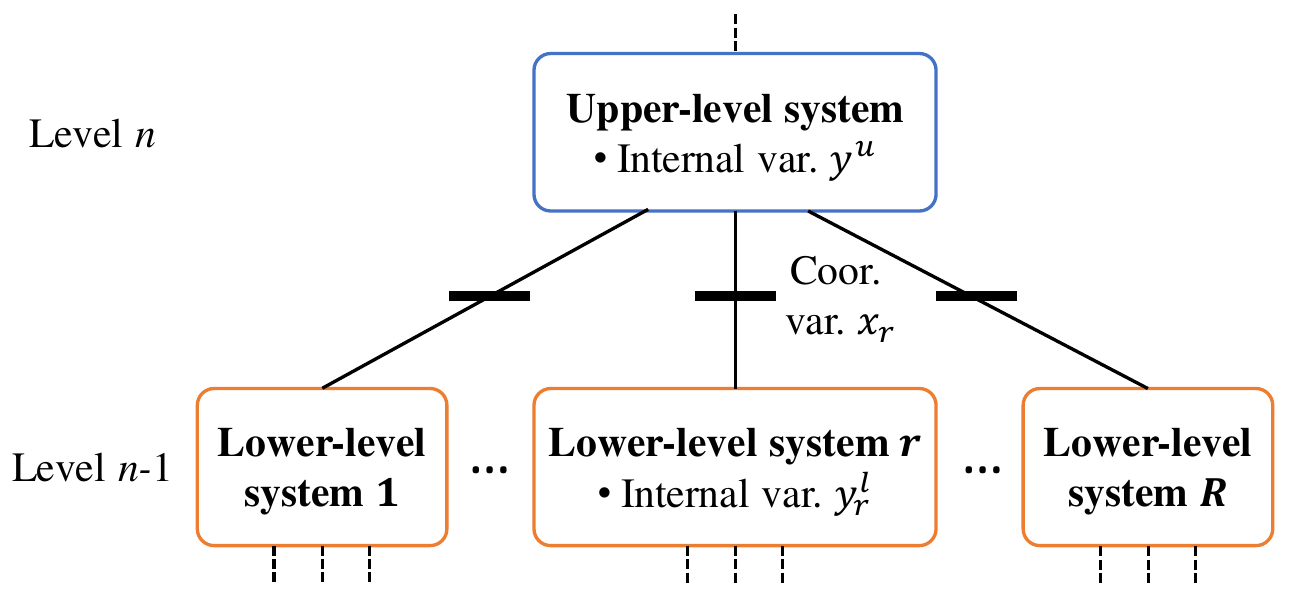}
  \caption{System structure of two adjacent levels.}
  \label{fig_structure}
\end{figure}

\begin{itemize}
  \item Coordination variable $x\in \mathbb{R}^{N_x}$, which contains decision variables at the boundary node, e.g., the power exchange between the two levels;
  \item Internal variable $y\in \mathbb{R}^{N_y}$, which contains internal control variables (e.g., power output of generators) and state variables (e.g., voltage magnitudes and phase angles) of the subsystem. 
\end{itemize}
Following the hierarchical management paradigm, the upper-level system determines the value of the coordination variable, but cannot directly determine the internal variable of each lower-level subsystem. The lower-level subsystem optimally determines the value of the internal variable with the coordination variable as the boundary condition.

In the following contents, we use superscript $u$ and $l$ to label variables of upper-and lower-level systems, respectively. Lower-level systems are indexed by $r \in [R]$, where $R$ is the number of subsystems and $[R]:=\{1,\cdots,R\}$. To concatenate column vectors, we use the syntax $(x_1,\cdots,x_R):=[x_1^\top,\cdots,x_R^\top]^\top$ and $x_{[R]}:=(x_1,\cdots,x_R)$. 

\subsection{Joint Optimal Dispatch}
The joint optimal dispatch (JOD) seeks the most cost-effective generation schedule of the entire multi-level system subject to operation constraints. Without loss of generality, the JOD model is represented as follows,
\begin{subequations}
  \label{jed}
  \begin{align}
    \min_{x,y} \ & C^u(x_{[R]}, y^u) + \sum_{r\in[R]} C^l_r(x_r,y^l_r) \label{jed:obj}\\
    \text{s.t.} \ & h^u(x_{[R]}, y^u) \leq 0,\label{jed:cst_c}\\
    & h_r^l(x_r, y_r^l) \leq 0, \forall r \in [R].\label{jed:cst_i}
  \end{align}
\end{subequations}
Function $C^u(\cdot)$ and $C^l(\cdot)$ respectively denote cost functions of upper-and lower-level systems, which only rely on internal variables. Equations \eqref{jed:cst_c} and \eqref{jed:cst_i} enforce operation constraints of the upper-level system and each lower-level system, respectively.

\subsection{Primal Decomposition}
In the JOD model, the optimization of different subsystems are coupled by coordination variable $x_r$. If the value of $x_r$ is given as $\hat{x}_{r}$, the JOD model can be decomposed into an upper-level problem and the lower-level problem corresponding to each subsystem, 
\begin{itemize}
  \item upper-level problem (UP\_0)
\end{itemize}
\begin{equation}
  \label{model:up_0}
  \min_{y^u} \left\{C^u(\hat{x}_{[R]}, y^u) : h^u(\hat{x}_{[R]}, y^u) \leq 0 \right\},
\end{equation}

\begin{itemize}
  \item lower-level problem (LP\_0)
\end{itemize}
\begin{equation}
  \label{model:lp_0}
  \min_{y^l_r} \left\{C^l(\hat{x}_{r}, y^l_r) : h^l_r(\hat{x}_{r}, y^l_r) \leq 0 \right\}, \forall r\in [R].
\end{equation}

This decomposition scheme is known as the primal decomposition in the optimization literature \cite{lit:dual_5}. The key point of this decomposition is to choose the proper value of $\hat{x}_{r}$ so that the feasibility and optimality of each subsystem are met. In engineering practice, the dispatch for different levels is decoupled and $\hat{x}_{r}$ is determined empirically or based on simplified lower-level system models. For instance, in the transmission network dispatch, distribution networks are simplified as nodal netload. In the pan-European spot market, the cross-regional trading is cleared by simplifying each price area as a virtual node \cite{ref:eu}. However, since the detailed information of lower-level systems is not completely incorporated, $\hat{x}_{r}$ determined based on the simplified model may lead to uneconomical and even infeasible dispatch commands for lower-level systems. To this end, existing coordinated optimization algorithms update $\hat{x}_{r}$ through iterative information exchange among upper-and lower-level systems. In these methods, the upper-level system determines the optimal value of $\hat{x}_{r}$ by solving problem \eqref{model:up_0}. Then each lower-level system checks the feasibility and optimality of $\hat{x}_{r}$ and returns information to the upper-level system to adjust the value of $\hat{x}_{r}$. 

To overcome the drawbacks caused by iterations of conventional coordinated optimization methods, this study seeks the non-iterative solution for coordinating problem \eqref{model:up_0} and \eqref{model:lp_0}. The basic idea is to eliminate internal variables from problem \eqref{model:lp_0} and obtain an equivalent model with reduced scale to replace the original lower-level system model for the upper-level optimization. Detailed introductions of the equivalent system reduction and non-iterative coordinated optimization are included in the following two sections.

\section{Equivalent Projection Theory}\label{sec:theory}
To realize the non-iterative coordinated optimization, this study proposes the EP theory to make external equivalence of the lower-level system model. First, to ensure that internal variables only exist in constraints, the cost function of each lower-level system is transformed into an inequality form using the epigraph. Then, the feasible region characterized by technical and economic constraints of the lower-level system is projected onto the subspace of the coordination variable. Through the projection, internal variables of the lower-level system are eliminated, yielding a reduced model to equivalently represent the lower-level system model with much fewer data. 

\subsection{Reformulation of Objective Function}\label{chp:epigraph}
In the original JOD model \eqref{jed}, internal variable $y^l_r$ of the lower-level system not only appears in operation constraint \eqref{jed:cst_i}, but also exists in the objective function, which impedes the decomposition of the JOD model. Hence, we convert the lower-level objective function into the inequality-form via the epigraph. With this conversion, the JOD model is reformulated as follows,
\begin{subequations}
  \label{jed2}
  \begin{align}
    \min_{x,\pi,y} \ & C^u(x_{[R]}, y^u) + \sum_{r\in[R]} \pi_r \label{jed2:obj}\\
    \text{s.t.} \ & h^u(x_{[R]}, y^u) \leq 0,\label{jed2:cst_c}\\
    & h_r^l(x_r, y_r^l) \leq 0, \forall r \in [R],\label{jed2:cst_i}\\
    & C^l_r(x_r, y_r^l) \leq \pi_r \leq \overline{\pi}_r, \forall r \in [R].\label{jed2:cst_e}
  \end{align}
\end{subequations}
Equation \eqref{jed2:cst_e} represents the epigraph of the lower-level cost function $C^l_r(x_r, y_r^l)$. Variable $\pi_r$ denotes the operation cost of lower-level system $r$. Constant $\overline{\pi}_r$ is introduced to bound the value of $\pi_r$, which can take any value larger than the supremum of $C^l_r(x_r, y_r^l)$. 

\begin{theorem}
  \label{theorem:jed}
  The reformulated JOD model \eqref{jed2} is equivalent to the original JOD model \eqref{jed}.
\end{theorem}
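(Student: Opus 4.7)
The plan is to prove the two models share the same optimal value and corresponding optimizers via the standard epigraph argument, which is complicated only slightly by the explicit upper bound $\overline{\pi}_r$. I would set up two directions: (a) any feasible point of \eqref{jed} lifts to a feasible point of \eqref{jed2} with the same objective, and (b) any optimal point of \eqref{jed2} projects onto a feasible point of \eqref{jed} with the same objective. Combined, these imply that the two optimal values coincide and that the $(x,y)$-components of the optimizers agree.

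For direction (a), I would take any $(x,y)$ feasible for \eqref{jed} and set $\pi_r := C^l_r(x_r,y_r^l)$ for each $r\in[R]$. The left inequality of \eqref{jed2:cst_e} holds with equality by construction, while the right inequality $\pi_r\le \overline{\pi}_r$ holds because $\overline{\pi}_r$ was chosen strictly larger than the supremum of $C^l_r$ over its domain. All remaining constraints \eqref{jed2:cst_c} and \eqref{jed2:cst_i} are identical to \eqref{jed:cst_c} and \eqref{jed:cst_i}, and the objective \eqref{jed2:obj} evaluates to $C^u(x_{[R]},y^u)+\sum_r C^l_r(x_r,y_r^l)$, which is exactly \eqref{jed:obj}. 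Hence the optimal value of \eqref{jed2} is no larger than that of \eqref{jed}.

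For direction (b), I would take an optimal $(x^*,\pi^*,y^*)$ of \eqref{jed2} and argue by contradiction that $\pi_r^* = C^l_r(x_r^*,y_r^{l*})$ for every $r$. If $\pi_r^*>C^l_r(x_r^*,y_r^{l*})$ for some $r$, then decreasing $\pi_r^*$ slightly keeps \eqref{jed2:cst_e} satisfied (the upper bound $\overline{\pi}_r$ remains slack since $\pi_r^*\le \overline{\pi}_r$ and we are moving downward) while leaving all other constraints untouched, and strictly decreases \eqref{jed2:obj}, contradicting optimality. Thus $(x^*,y^*)$ is feasible for \eqref{jed} with objective value equal to that of \eqref{jed2}, so the optimal value of \eqref{jed} is no larger than that of \eqref{jed2}. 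Combining both directions closes the proof.

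The only subtle point is the role of the upper bound $\overline{\pi}_r$: if $\overline{\pi}_r$ were chosen too small, it could cut off otherwise feasible cost levels and strictly raise the optimal value of \eqref{jed2}. I would therefore make explicit, either in the proof or as a standing hypothesis, that $\overline{\pi}_r > \sup\{C^l_r(x_r,y_r^l):h^l_r(x_r,y_r^l)\le 0\}$ so that $\pi_r\le \overline{\pi}_r$ is never binding at any point arising from direction (a), and never binding at optimality in direction (b). This ensures the epigraph constraint acts solely as an equality-at-optimum device and the reformulation is exact rather than a restriction. No other step is delicate; the remaining work is mechanical substitution.
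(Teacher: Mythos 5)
Your proposal is correct and follows essentially the same route as the paper: the core of both arguments is that, because $\pi_r$ is minimized, the epigraph constraint $C^l_r(x_r,y_r^l)\le\pi_r$ must bind at any optimum of \eqref{jed2}, which collapses the reformulation back onto \eqref{jed}. You merely make explicit the trivial lifting direction and the standing requirement $\overline{\pi}_r>\sup C^l_r$ (which the paper states when introducing $\overline{\pi}_r$ rather than inside the proof), so no substantive difference exists.
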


\begin{proof}
  Note that problem \eqref{jed2} minimizes over $\pi_r$, the first less-than-equal sign in equation \eqref{jed2:cst_e} will always get binding at the optimum, i.e., $C^l_r(\hat{x}_r, \hat{y}^{L}_r) = \hat{\pi}_r$ always holds for optimal solution $(\hat{x}_r, \hat{y}^{L}_r, \hat{\pi}_r)$. This is because if the constraint is not binding, a value of $\pi_r$ smaller than $\hat{\pi}_r$ can be found to further decrease the value of the objective function \eqref{jed2:obj}, which conflicts with that $\hat{\pi}_r$ is the optimal solution. Hence, the reformulated JOD model \eqref{jed2} is equivalent to the original model \eqref{jed}.
\end{proof}

In the reformulated JOD model, regard $(x_r, \pi_r)$ as the augmented coordination variable of subsystem $r$. Then internal variable $y^l_r$ only appears in the lower-level operation constraint, and the upper-and lower-level systems are only coupled through $(x_r, \pi_r)$. 

\subsection{Equivalent Projection}
The target of the EP is to eliminate internal variables from the optimization model of each lower-level system to obtain its external equivalence. First, define the operation feasible region (OFR) of each lower-level system,
\begin{definition}[OFR]\label{def:ofr}
  The OFR of lower-level system $r$ is the feasible region of both coordination variables and internal variables subject to the operation constraints, i.e., $\Omega_r := \left\{(x_r, \pi_r, y^l_r)\in \mathbb{R}^{N_x+1} \times \mathbb{R}^{N_y} : \text{Eq } \eqref{jed2:cst_i}-\eqref{jed2:cst_e} \right\}$.
\end{definition}

Then define the EP of the lower-level system,
\begin{definition}[EP]\label{def:ESR}
  The EP model of lower-level system $r$ is the projection of $\Omega_r$ onto the subspace of $(x_r,\pi_r)$, i.e., $\Phi_r := \left\{(x_r,\pi_r) \in \mathbb{R}^{N_x+1} : \exists y^l_r, \ \text{s.t. } (x_r, \pi_r, y^l_r) \in \Omega_r \right\}$.
\end{definition}

For any coordination variable $(x_r,\pi_r)$ satisfies the EP model, the above definition ensures that there is at least one feasible operation state $y^l_r$ to execute the dispatch command $x_r$ with operation cost no larger than $\pi_r$. Through the projection, internal variables are eliminated from the lower-level optimization model, yielding a lower-dimensional feasible region to depict the technical and economic features of the system. The resulted EP model contains all possible values of coordination variables that can be executed by the lower-level system securely and economically and thus, can be used as the substitute for the detailed lower-level model in the coordinated optimization.

\subsection{Discussions}
\subsubsection{Privacy protection}
In the coordinated dispatch, private and sensitive information of lower-level systems should not be revealed, which typically refers to network parameters, nodal loads, and cost functions of resources \cite{ref:privacy1, ref:privacy2}. The privacy protection of the EP is guaranteed by the following theorem.
\begin{theorem}
  The original model of the lower-level system cannot be inferred from the explicit representation of its EP model.
\end{theorem}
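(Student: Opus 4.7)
The plan is to establish the theorem by showing that the projection underlying the EP definition is inherently many-to-one, so there can be no function that recovers the original lower-level model from its EP alone. First, I would make the claim precise as a non-injectivity statement: let $\mathcal{M}$ denote the data tuple $(N_y, h^l_r, C^l_r)$ describing an original lower-level model, and let $\mathcal{M} \mapsto \Phi_r(\mathcal{M})$ be the map obtained by forming the OFR of Definition~\ref{def:ofr} and projecting as in Definition~\ref{def:ESR}. The theorem is equivalent to showing this map is not invertible, i.e., that there exist $\mathcal{M}_1 \neq \mathcal{M}_2$ with $\Phi_r(\mathcal{M}_1) = \Phi_r(\mathcal{M}_2)$, so that knowledge of $\Phi_r$ alone is insufficient to single out $\mathcal{M}$.

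Next I would exhibit such a pair constructively. Starting from any lower-level system with a nontrivial EP, I would build a companion model by enlarging the internal variable space with operationally neutral modifications: for instance, adjoining a generator whose upper and lower output bounds coincide at zero, splitting an existing generator into two units whose outputs are forced to sum to the original output with cost coefficients chosen so that the minimized combined cost reproduces the original one, or adding an isolated internal branch that carries no power. Each such modification changes the network topology, the explicit form of $h^l_r$, the dimension $N_y$, and possibly the coefficients of $C^l_r$, yet by construction it does not change which $(x_r,\pi_r)$ remain reachable. Hence the projection onto the coordination subspace is unchanged, demonstrating two distinct originals that share the same EP; iterating the construction yields infinitely many preimages of any given $\Phi_r$.

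Finally, I would reinforce the construction with a dimensional remark: $\Phi_r$ lives in $\mathbb{R}^{N_x+1}$, while the data describing $\mathcal{M}$ encodes the entire internal network, nodal loads, device limits, and cost coefficients, whose description length is typically orders of magnitude larger than that of $\Phi_r$. A surjection from high-dimensional model descriptions onto a low-dimensional image cannot be inverted, which is consistent with the constructive argument and gives a second, structural justification for non-recoverability. The main obstacle is specifying what \emph{inferred} means; I would resolve this by committing to the strongest reasonable reading, namely unique determination from $\Phi_r$, under which the construction above yields the required negative result, while any weaker interpretation (e.g., approximate or partial recovery up to invariants) only makes the conclusion easier to obtain.
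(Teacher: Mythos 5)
Your proposal is correct and takes essentially the same route as the paper: both arguments establish that the projection defining $\Phi_r$ is many-to-one by exhibiting distinct lower-level models (OFRs) with identical projections, so the EP cannot uniquely determine the original model. The only difference is cosmetic --- the paper uses the abstract pair consisting of $\Phi_r$ itself and $\left\{(x_r,\pi_r,0),(x_r,\pi_r,1)\right\}$ with $(x_r,\pi_r)\in\Phi_r$, whereas you construct physically meaningful preimages via operationally neutral modifications (zero-capacity units, generator splitting), which is a valid instantiation of the same non-injectivity idea.
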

\begin{proof}
  From Definition \ref{def:ESR}, it is obvious that $\Phi_r$ is the projection of $\Phi_r$ itself. $\Phi_r$ is also the projection of set $\left\{(x_r,\pi_r, 0), (x_r,\pi_r, 1) \right\}$ where $(x_r,\pi_r) \in \Phi_r$. Hence, one EP model may be the projection of different OFRs and thus, the original system model cannot be referred from only the EP model. 
\end{proof}

The projection is a many-to-one mapping from the high-dimensional region to the low-dimensional space, which endows the privacy-protection property of the EP. One may concern if the lower-level system model can be recovered from a series of EP models based on historical data. Actually, this is neither unachievable since the dimension of internal variable $y^l_r$ cannot be referred from EP models, let alone the detailed model of the lower-level system.

\subsubsection{Comparison with network reduction}
The EP is reminiscent of the power network reduction such as the Ward equivalence. The network reduction technique eliminates internal variables from the network equation and obtains a reduced equation for only boundary nodes. The network reduction is mathematically formulated as a matrix inversion problem and can be computationally solved by the Gaussian Elimination. The network reduction makes equivalence of the network equation at the boundary nodes. However, operation limits and costs of components in the network are neglected during the reduction. Hence, the network reduction can only be used in network analysis (e.g., power flow calculation, state estimation), but cannot be used for the coordinated optimization. In contrast, the proposed EP method eliminates internal variables from the entire optimization model containing not only the network equation, but also inequality constraints and operation cost. The resulted EP model extracts both technical and economic features of the subsystem to its boundary. Therefore, the EP model can be used to replace the optimization model of the lower-level system in the upper-level coordinated optimization. In this regard, the proposed EP method extends the equivalence theory of power systems.

\subsubsection{Comparison with feasible region projection}
In the recent literature, the feasible region projection is employed for operation constraints reduction. For the multi-area transmission system, the projected region is used to characterize the allowable range of tie-line power that can be executed by the regional system \cite{ref:tan1}. For the active distribution network, the projected region is used to characterize the admissible range of flexibility the distribution system can provide \cite{ref:tan2, ref:tan3, ref:minkowski, ref:appr}. Though the technical feasibility of the lower-level system can be guaranteed via the feasible region projection, the economic characteristics are not finely incorporated and thus, the optimality of the coordination is not guaranteed. With the EP in the present paper, in contrast, economic and technical characteristics of the lower-level system are projected onto the coordination space simultaneously. As will be proven in the following section, using the EP model to replace the original lower-level system model in the coordinated optimization, the coordinated dispatch result will be consistent with the feasibility and optimality of the lower-level system. 


\section{Coordinated Optimization Based on EP}
\subsection{EP-based Model Decomposition}
In the proposed framework, each lower-level system submits its EP model as a substitute for its original model to participate in the upper-level coordinated dispatch. Thereby, the JOD model is equivalently decomposed into the following two levels,
\begin{itemize}
  \item upper-level problem (UP\_1)
\end{itemize}
\begin{subequations}
  \label{model:up_1}
  \begin{align}
    \min_{x,\pi,y^u} \ & C^u(x_{[R]}, y^u) + \sum_{r\in[R]} \pi_r\\
    \text{s.t.} \ & h^u(x_{[R]},y^u) \leq 0,\\
    & (x_r, \pi_r) \in \Phi_r, \forall r\in [R].
  \end{align}
\end{subequations}

\begin{itemize}
  \item lower-level problem (LP\_1)
\end{itemize}
\begin{subequations}
  \label{model:lp_1}
  \begin{align}
    \min_{y^l_r} \ & C^l_r(\hat{x}_r, y^l_r)\\
    \text{s.t.} \ & h^l_r(\hat{x}_r,y^l_r) \leq 0,\\
    & C^l_r(\hat{x}_r, y^l_r) \leq \hat{\pi}^l_r.\label{model:lp_1_cost}
  \end{align}
\end{subequations}
In problem LP\_1, $\hat{x}_r$ and $\hat{\pi}^l_r$ are optimal solution of the upper-level problem UP\_1. Each lower-level system dispatches its local system with the upper-level decision result as the boundary condition.

\subsection{Non-Iterative Coordination Scheme}\label{sec:scheme}
Based on the EP, the optimal dispatch of upper-and lower-level systems can be coordinated in a non-iterative fashion. The proposed coordination scheme contains three stages, i.e., system reduction, coordinated optimization, and subsystem operation. The information exchange procedure of the proposed coordination scheme is illustrated in Fig. \ref{fig_process}. Detailed introductions are as follows.
\begin{figure}[t]
  \centering
  \includegraphics[width = 3.5 in]{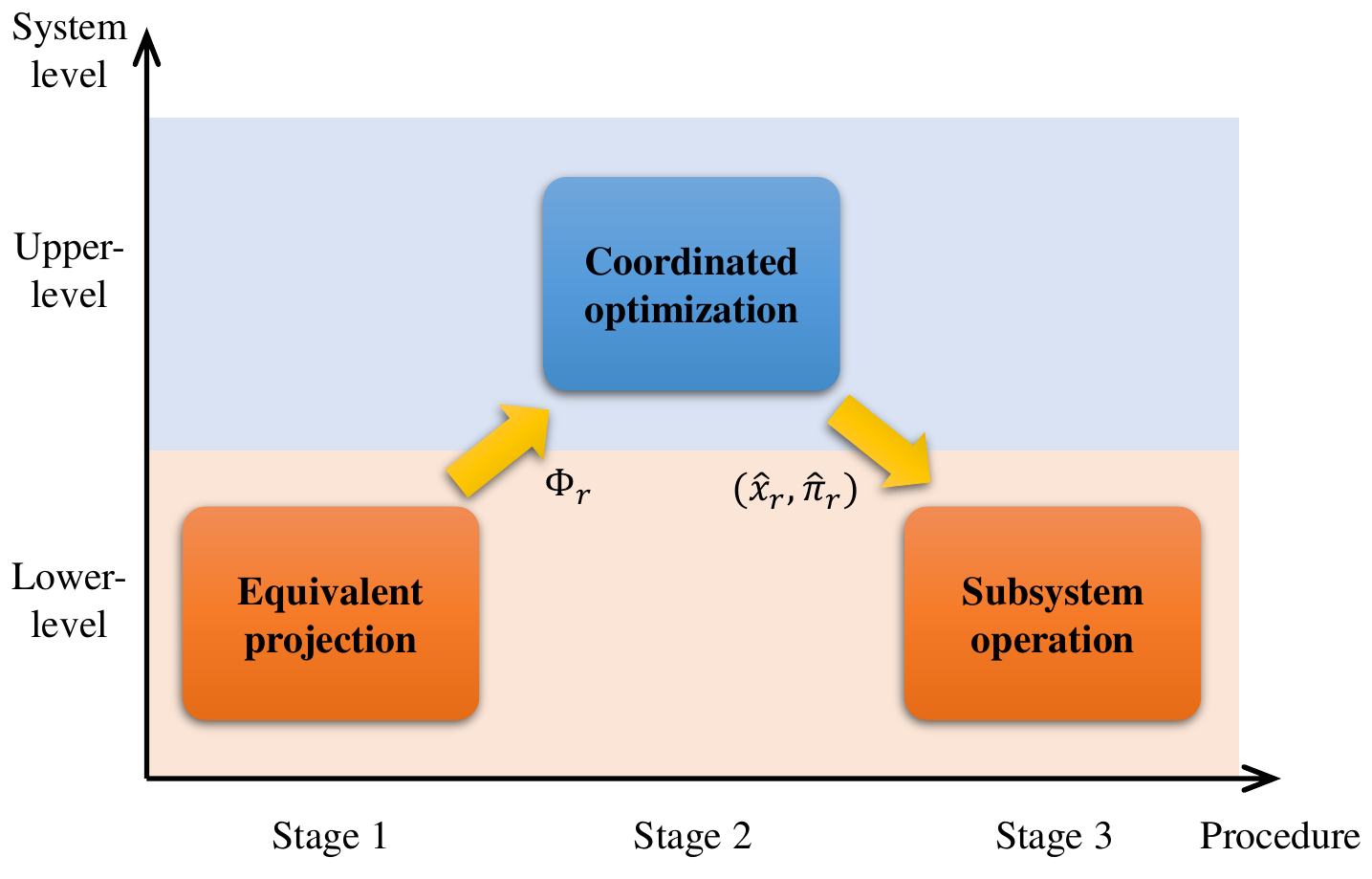}
  \caption{Procedure of the EP-based coordinated optimization.}
  \label{fig_process}
\end{figure}

\begin{itemize}
  \item \textbf{Stage 1: equivalent projection.} Each lower-level system calculates its EP model $\Phi_r$ according to Definition \ref{def:ESR} and submits $\Phi_r$ to the upper-level system. 
  \item \textbf{Stage 2: coordinated optimization.} The upper-level system solves problem \eqref{model:up_1} with $\Phi_r$ of each lower-level system as constraints. The optimal solution for the coordination variable is $(\hat{x}_r,\hat{\pi}_r)$, which is published to lower-level systems as dispatch command. 
  \item \textbf{Stage 3: subsystem operation.} Each lower-level system fixes the coordination variable to $(\hat{x}_r,\hat{\pi}_r)$ and solves problem \eqref{model:lp_1} to dispatch the local system. Since $(\hat{x}_r,\hat{\pi}_r) \in \Phi_r$, problem \eqref{model:lp_1} is ensured to be feasible with $(\hat{x}_r,\hat{\pi}_r)$ as the boundary condition.
\end{itemize}

The above coordination scheme only requires one round of interaction between upper-and lower-level systems, which overcomes drawbacks of conventional iterative coordination algorithms such as slow convergence, risk of iteration oscillation, and complicated information exchange. Additionally, the proposed coordination procedure is serial, which is compatible with the existing hierarchical management paradigm of power systems. 

\subsection{Discussions}
\subsubsection{Optimality} 
The optimality of the EP-based coordination scheme is proven as follows.
\begin{theorem}
  The coordinated solution of problem \eqref{model:up_1} and problem \eqref{model:lp_1} is equivalent to that of the JOD.
\end{theorem}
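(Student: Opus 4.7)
The plan is to reduce to the reformulated joint model \eqref{jed2}, which by Theorem \ref{theorem:jed} has the same optimum as the original JOD \eqref{jed}, and then establish a value- and solution-preserving correspondence between optima of \eqref{jed2} and the two-stage procedure UP\_1 followed by LP\_1. The skeleton is a standard projection argument: project out $y^l_{[R]}$ from \eqref{jed2} to get UP\_1, then lift any UP\_1 solution back to a full \eqref{jed2} solution via LP\_1.

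First, I would argue that UP\_1 and \eqref{jed2} have the same optimal value. The key observation is that the projection of the feasible region of \eqref{jed2} onto $(x_{[R]},\pi_{[R]},y^u)$ coincides with the feasible region of UP\_1. Indeed, for any $(x_{[R]},\pi_{[R]},y^u,y^l_{[R]})$ feasible in \eqref{jed2}, the subvector $(x_{[R]},\pi_{[R]},y^u)$ satisfies the upper-level constraint and, by Definitions \ref{def:ofr}--\ref{def:ESR}, satisfies $(x_r,\pi_r)\in\Phi_r$ for every $r$, hence is feasible in UP\_1. Conversely, any UP\_1-feasible point can be lifted: $(x_r,\pi_r)\in\Phi_r$ guarantees the existence of some $y^l_r$ with $(x_r,\pi_r,y^l_r)\in\Omega_r$, producing a \eqref{jed2}-feasible point with the same objective value since the UP\_1 objective depends only on the projected variables.

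Next, I would show that LP\_1 is always feasible at the UP\_1 optimum and that its solution completes a globally optimal tuple for \eqref{jed2}. Let $(\hat{x},\hat{\pi},\hat{y}^u)$ be any optimizer of UP\_1. By $(\hat{x}_r,\hat{\pi}_r)\in\Phi_r$ and Definition \ref{def:ESR}, LP\_1 is feasible for each $r$; let $\hat{y}^l_r$ be an optimizer. Then $(\hat{x},\hat{\pi},\hat{y}^u,\hat{y}^l_{[R]})$ is feasible in \eqref{jed2} with objective value $C^u(\hat{x}_{[R]},\hat{y}^u)+\sum_r\hat{\pi}_r$, which matches the UP\_1 optimum, hence the \eqref{jed2} optimum, and therefore by Theorem \ref{theorem:jed} the JOD optimum as well.

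The main obstacle, which requires careful justification, is that the JOD objective contains the actual cost $C^l_r(\hat{x}_r,\hat{y}^l_r)$ rather than the surrogate $\hat{\pi}_r$, so equivalence of optimal values must be argued at the level of realized costs, not just the $\pi_r$-values. I would close this gap by the same epigraph-tightness argument used in the proof of Theorem \ref{theorem:jed}: if $C^l_r(\hat{x}_r,\hat{y}^l_r)<\hat{\pi}_r$ for some $r$, then in UP\_1 one could decrease $\hat{\pi}_r$ slightly toward $C^l_r(\hat{x}_r,\hat{y}^l_r)$ while preserving $(\hat{x}_r,\hat{\pi}_r)\in\Phi_r$, strictly reducing the objective and contradicting UP\_1-optimality. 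Hence $\hat{\pi}_r=C^l_r(\hat{x}_r,\hat{y}^l_r)$ at any optimum, so the UP\_1 plus LP\_1 objective equals the original JOD objective evaluated at $(\hat{x},\hat{y}^u,\hat{y}^l_{[R]})$, completing the proof.
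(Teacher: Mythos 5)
Your proposal is correct and follows essentially the same route as the paper's own proof: establish that UP\_1 is the projection of the reformulated model \eqref{jed2} (so their optima coincide), lift the UP\_1 optimum back via LP\_1, invoke Theorem \ref{theorem:jed}, and use the epigraph-tightness argument to equate the surrogate $\hat{\pi}_r$ with the realized cost $C^l_r(\hat{x}_r,\hat{y}^l_r)$. Your write-up is in fact a slightly more careful rendering of the paper's statement that UP\_1 and \eqref{jed2} have ``the same feasible region,'' which strictly speaking should be read as equality up to projection, exactly as you phrase it.
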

\begin{proof}
  In problem \eqref{model:up_1}, its optimal solution $(\hat{x}_r,\hat{\pi}_r) \in \Phi_r$. According to Definition \ref{def:ESR}, there is $\hat{y}^l_r$ such that $(\hat{x}_r,\hat{\pi}_r, \hat{y}^l_r)$ satisfies constraint \eqref{jed2:cst_i} and \eqref{jed2:cst_e} and thus, $(\hat{x}_r,\hat{\pi}_r, \hat{y}^u,\hat{y}^l_r)$ is feasible for problem \eqref{jed2}. Let $(\breve{x}_r,\breve{\pi}_r, \breve{y}^u,\breve{y}^l_r)$ be the optimal solution of problem \eqref{jed2}. According to Definition \ref{def:ESR}, $(\breve{x}_r,\breve{\pi}_r) \in \Phi_r$ and thus, $(\breve{x}_r,\breve{\pi}_r, \breve{y}^u)$ is feasible for problem \eqref{model:up_1}. Hence, problem \eqref{model:up_1} and \eqref{jed2} have the same feasible region. Note that problem \eqref{model:up_1} and \eqref{jed2} also have identical objective functions, the optimal solution of problem \eqref{model:up_1} is equivalent to that of problem \eqref{jed2}. According to Theorem \ref{theorem:jed}, problem \eqref{jed2} is equivalent to the original JOD problem \eqref{jed} and thus, the solution of problem \eqref{model:up_1} is equivalent to the JOD. With the minimized cost $\hat{\pi}_r$, constraint \eqref{model:lp_1_cost} will take the equal sign at the optimum of problem \eqref{model:lp_1}, otherwise a $\pi^l_r$ smaller than $\hat{\pi}_r$ can be found, which conflicts with that $\hat{\pi}_r$ is the optimal value. Hence, the optimal solution of problem \eqref{model:lp_1} is equal to that of \eqref{jed}.
\end{proof}

Note that the above theorem and proof do not restrict the form of the JOD model. Hence, the EP-based coordinated optimization framework is general, and is capable of both convex and nonconvex problems. In addition to the mathematical proof, the optimality of the EP-based coordination can also be analyzed through physical interpretation. According to Definition \ref{def:ESR}, the EP model contains all coordination variables that are technically and economically feasible for the lower-level system. Hence, the upper-level decision constrained by the EP model can be executed by the lower-level system with minimized operation cost, which ensures the equivalence between the EP-based coordination and the original JOD solution.

\subsubsection{Complexity analysis}
Let $G^u$ and $G^l_r$ denote the model scale of the upper-level and lower-level problems, respectively. The problem scale can be measured by the product of the variable number and constraint number of the problem (assume there is no redundant variable or constraint). Given the form of the optimization problem, the computation time of the problem is an increasing function of its scale, denoted by $\mathcal{T}^o(\cdot)$. For the lower-level system, the time of system reduction is an increasing function of the problem scale and the coordination variable dimension, denoted by $\mathcal{T}^p(G^l_r,N_x)$. In the EP-based coordination scheme, the total computation time is the sum of the 3 stages. Note that both Stage 1 and Stage 3 are implemented simultaneously by different lower-level systems and thus, the computation time of the corresponding stage relies on the longest time of lower-level systems. Hence, the total computation time of the EP-based coordination scheme is
\begin{equation}
  \mathcal{T}^{coor} = \max_{r\in[R]} \mathcal{T}^p(G^l_r,N_x) + \mathcal{T}^o(G^u) + \max_{r\in[R]}\mathcal{T}^o(G^l_r).
\end{equation}

The total computation time of directly solving the JOD is
\begin{equation}
  \mathcal{T}^{jod} = \mathcal{T}^o(G^u + \sum_{r\in[R]} G^l_r).
\end{equation}

If the scale of each system is given, the computation time of solving the JOD will increase with the number of lower-level systems. In contrast, the computation time of the EP-based coordination method mainly depends on the longest time of lower-level systems and will not be significantly impacted by the number of lower-level systems. The reason is that in the EP-based coordination scheme, each lower-level system reduces its original model into the small-scale EP model to participate in the upper-level optimization, which alleviates the computation burden of the upper-level system. Though more computation efforts are required by Stage 1 and Stage 3, they are executed by each lower-level system in parallel and do not add much to the overall computation time. In this regard, the EP-based coordination method distributes the computation complexity of the joint optimization to subsystems and will improve computation efficiency, especially for coordinated optimization problems with numerous lower-level systems. This advantage is verified based on detailed application scenarios in Part II of this paper. The projection algorithm is a key factor that impacts the computation time of the EP-based coordination scheme, which is also addressed in Part II. 

\subsubsection{Application to multi-level system}
The EP-based framework is also capable of coordinated optimization for the hierarchical system with multiple levels. The coordination for the multi-level system also requires three stages similar to the process in Section \ref{sec:scheme}. As illustrated in Fig. \ref{fig_hierarchical}, Stage 1 is implemented in a `bottom-up' manner. The system reduction calculation starts from the lowest level and then, systems at higher levels calculate their EP models with lower-level EP models as constraints. This process continues until it comes to the highest level, which optimizes the value of coordinated variables incorporating EP models of its connected lower-level systems. Then Stage 2 is implemented in a `top-down' manner to disaggregate the dispatch command. Systems at each level solve their local optimal dispatch problems with dispatch command from the upper-level system as the boundary condition. This process continues until it comes to the lowest level. As can be seen, the EP-based coordination process does not require multiple iterations for the multi-level system. 
\begin{figure}[t]
  \centering
  \includegraphics[width = 3.1 in]{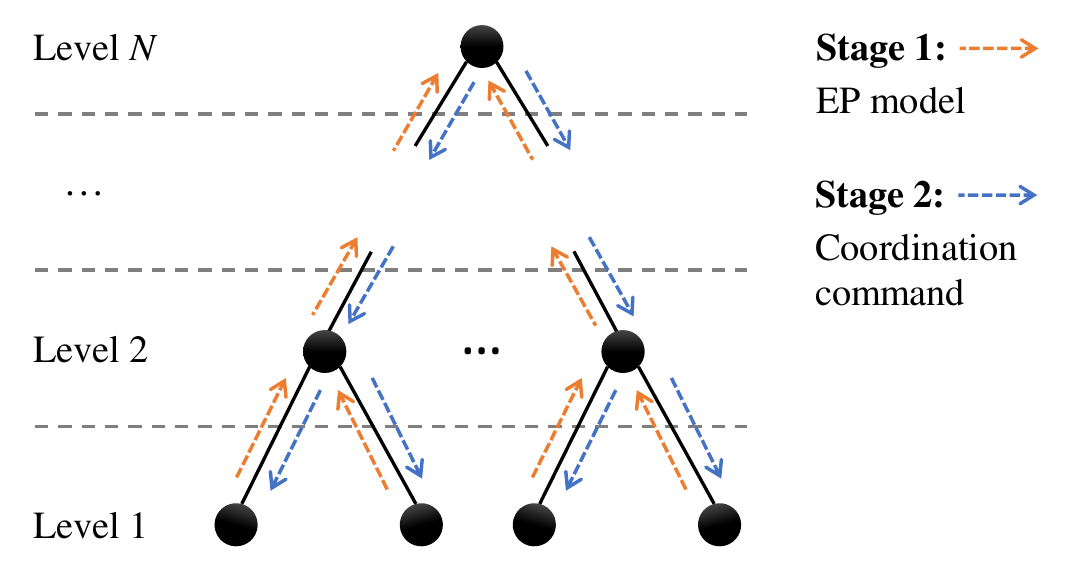}
  \caption{Coordination scheme for the multi-level system.}
  \label{fig_hierarchical}
\end{figure}

If iterative coordinated optimization methods are applied to the multi-level system, the total number of iterations among different levels will be $\mathcal{O}(m^n)$, where $m$ is the number of iterations required by the coordination between two adjacent levels and $n$ is the number of levels. The iteration number of iterative coordination methods is exponential to the number of system levels, making it difficult to be applied to multi-level systems. This drawback is overcome naturally by the EP-based method since no iteration is needed no matter how many levels the system has. 

\begin{figure*}[t]
  \centering
  \includegraphics[width = 7.16 in]{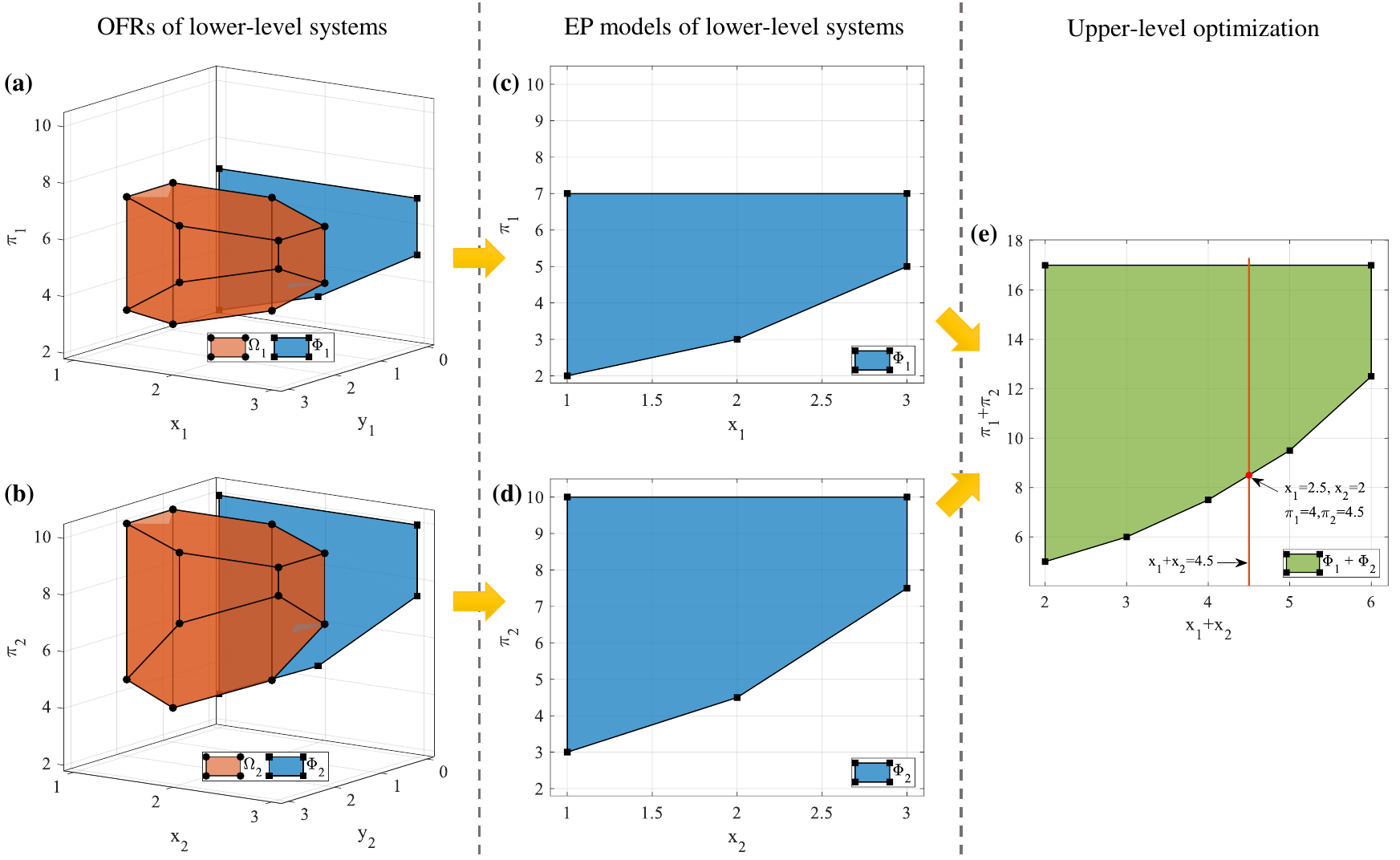}
  \caption{Illustrative example results. (a) and (b) are OFRs of lower-level system 1 and system 2. (c) and (d) are EP models of lower-level systems in the coordination space. (e) exhibits the upper-level optimization results.}
  \label{fig_illu}
\end{figure*}

\section{Illustrative Example}
The implementation of the EP-based coordinated optimization is exhibited via an illustrative example. The test system is composed of an upper-level system and two lower-level systems. The JOD of the test system takes the form of \eqref{jed} and is parametrized as follows,
\begin{subequations}
  \label{case_md}
  \begin{align}
    \min_{x,\pi,y} \ & \ \pi_1 + \pi_2  \label{case_md:obj}\\
    \text{s.t.} \ & \ x_1 + x_2 = 4.5 \rightarrow \text{Upper-level Constraint} \label{case_md:up}\\
    & \left. \begin{array}{l}
      1 \leq x_1 \leq 3,\  1 \leq y_1 \leq 3\\
      -1 \leq -x_1+y_1 \leq 1\\
      x_1 + y_1 \leq \pi_1 \leq 7\\
    \end{array}
    \right\} \text{Lower Sys. 1} \label{case_md:low1}\\
    & \left. \begin{array}{l}
      1 \leq x_2 \leq 3,\  1 \leq y_2 \leq 3\\
      -1 \leq -x_2+y_2 \leq 1\\
      1.5 \times (x_2 + y_2) \leq \pi_2 \leq 10\\
    \end{array}
    \right\} \text{Lower Sys. 2} \label{case_md:low2}
  \end{align}
\end{subequations}
In the above model, $x_1$ and $x_2$ are coordination variables of the two lower-level systems, respectively. $y_1$ and $y_2$ are internal variables. $\pi_1$ and $\pi_2$ are cost variables. The objective of the JOD in \eqref{case_md:obj} is to minimize the total cost of the lower-level systems. Equation \eqref{case_md:up} is the constraint of the upper-level system, which couples the optimization of the lower-level systems. Equation \eqref{case_md:low1} and \eqref{case_md:low2} are constraints of the lower-level systems. The JOD problem \eqref{case_md} is a linear programming. By directly solving the problem, its optimal solution is
\begin{equation}
  \label{case:rst1}
  (\hat{x}_1,\hat{y}_1,\hat{\pi}_1) = (2.5, 1.5, 4)
\end{equation}
\begin{equation}
  \label{case:rst2}
  (\hat{x}_2,\hat{y}_2,\hat{\pi}_2) = (2, 1, 4.5)
\end{equation}

According to Definition \ref{def:ofr}, operation feasible regions of the two lower-level systems are as follows
\begin{equation}
  \label{case:ofr1}
  \Omega_1 = \left\{(x_1, \pi_1, y_1) \in \mathbb{R}^3: \text{Eq. } \eqref{case_md:low1} \right\},
\end{equation}
\begin{equation}
  \label{case:ofr2}
  \Omega_2 = \left\{(x_2, \pi_2, y_2) \in \mathbb{R}^3: \text{Eq. } \eqref{case_md:low2} \right\}.
\end{equation}
Both $\Omega_1$ and $\Omega_2$ are 3-dimensional polytopes, as red regions show in Fig. \ref{fig_illu} (a) and (b). 

The problem scale of this test case is small and the projection can be calculated by the classic Fourier-Motzkin Elimination (FME) method. Project $\Omega_1$ and $\Omega_2$ onto the subspace of $(x,\pi)$, EP models of lower-level systems are obtained, as blue regions illustrate in Fig. \ref{fig_illu} (c) and (d). In this case, EP models are 2-dimensional polygons, which are represented as follows,
\begin{equation}
  \label{case:ESR1}
  \begin{split}
    \Phi_1 = \{(x_1,\pi_1) \in \mathbb{R}^2:1 \leq x_1 \leq 3, \pi_1 \leq 7,\\    
    x_1 - \pi_1 \leq -1, 2 x_1 - \pi_1 \leq 1\},
  \end{split}
\end{equation}

\begin{equation}
  \label{case:ESR2}
  \begin{split}
    \Phi_2 = \{(x_2,\pi_2) \in \mathbb{R}^2:1 \leq x_2 \leq 3, \pi_2 \leq 10,\\    
    3 x_2 - 2 \pi_2 \leq -3, 6 x_2 - 2 \pi_2 \leq 3 \}.
  \end{split}
\end{equation}

The two lower-level systems submit their EP models as substitutes for their original models to form the upper-level coordinated optimization problem. In this illustrative case, the upper-level optimization problem can be solved by the graphical method. The objective in \eqref{case_md:obj} is to minimize the sum of $\pi_1$ and $\pi_2$, and the upper-level constraint is the limitation on the sum of $x_1$ and $x_2$. Take $x_1+x_2$ and $\pi_1+\pi_2$ as decision variables, then the feasible region of the upper-level problem is the intersection of the Minkowski sum of EP models of lower-level systems and the line $x_1+x_2=4.5$, as illustrated in Fig. \ref{fig_illu} (e). From the figure, it can be obtained that the optimal solution of the upper-level problem is: $\hat{x}_1 = 2.5$, $\hat{x}_2 = 2$, $\hat{\pi}_1 = 4$, $\hat{\pi}_2 = 4.5$. Fix the upper-level decision results in \eqref{case_md:low1} and \eqref{case_md:low2}, also note that $\hat{\pi}_1$ and $\hat{\pi}_2$ are minimized, then it can be inferred that $\hat{y}_1 = 1.5$ and $\hat{y}_2 = 1$. As can be seen, the solution based on the EP-based coordinated optimization is equivalent to the joint optimization solution in \eqref{case:rst1} and \eqref{case:rst2}, which validates the effectiveness of the proposed coordination method.

\section{Conclusion}
To overcome drawbacks brought by repetitive iterations of conventional coordinated optimization methods, this paper proposes a novel reduction theory namely the EP, and develops a non-iterative COD framework for hierarchical power systems. The EP eliminates internal variables from technical and economic constraints of the lower-level system, and makes external equivalence of the entire optimization model of the system with much fewer data. In the proposed COD framework, the EP model is used to replace the original lower-level system model in coordinated optimization. With the EP, only a single-round exchange of some boundary information of lower-level systems is required to achieve the coordinated optimality, which avoids iterations among subsystems. The EP-based COD method is proven to protect private information, guarantee the same optimality as the joint optimization, and is capable of multi-level coordinated optimization problems. A numerical example demonstrates the detailed process of the EP-based coordination framework. 

In Part II of this paper, the methodology for calculating the EP model will be introduced. The performance of the EP-based non-iterative COD will also be tested based on specific applications.



\ifCLASSOPTIONcaptionsoff
  \newpage
\fi


\bibliographystyle{IEEEtran}
\bibliography{Reference}

\end{document}